\newtheorem{thm}{Theorem}[section]
\newtheorem{cor}[thm]{Corollary}
\newtheorem{lem}[thm]{Lemma}
\newtheorem{prop}[thm]{Proposition}
\numberwithin{equation}{section}
\def\pn{\par\noindent}
\begin{document}

%------------------------------------------------------------------------------------%
%%Don not change any thing in this part
\leftline{ \scriptsize \it Bulletin of the Iranian Mathematical
Society  Vol. {\bf\rm XX} No. X {\rm(}201X{\rm)}, pp XX-XX.}

\vspace{1.3 cm}

%------------------------------------------------------------------------------------%
\title{On  solubility  of groups with finitely many centralizers}
\author{Mohammad Zarrin}

\thanks{{\scriptsize
\hskip -0.4 true cm MSC(2010): Primary: 20D99; Secondary: 20E07.
\newline Keywords: Centralizer, n-centralizer group, simple group.\\
Received: 25 December 2011, Accepted: 18 April June 2012.\\
%$*$Corresponding author
\newline\indent{\scriptsize $\copyright$ 2011 Iranian Mathematical
Society}}}

\maketitle

%------------------------------------------------------------------------------------%
%This part will be filled in by BIMS
\begin{center}
Communicated by\;
\end{center}
%------------------------------------------------------------------------------------%

\begin{abstract}  For any group $G$, let $\mathcal{C}(G)$ denote the set of
centralizers of $G$. We say that a group $G$ has $n~centralizers$
($G$ is a $\mathcal{C}_n$-group) if $|\mathcal{C}(G)| = n$. In
this note, we prove that every finite $\mathcal{C}_n$-group with
$n\leq 21$ is soluble and this estimate is sharp. Moreover, we
prove that every finite $\mathcal{C}_n$-group with $|G|<
\frac{30n+15}{19}$ is non-nilpotent soluble. This result gives a
partial answer to a conjecture raised by A. Ashrafi in 2000.
\end{abstract}

\vskip 0.2 true cm

%------------------------------------------------------------------------------------%

\pagestyle{myheadings}
\markboth{\rightline {\scriptsize  Zarrin}}
         {\leftline{\scriptsize On  solubility  of groups with finitely many centralizers}}

\bigskip
\bigskip

%------------------------------------------------------------------------------------%
%------------------------------------------------------------------------------------%

\section{\bf Introduction}
For any group $G$, let $\mathcal{C}(G)$ denote the set of
centralizers of $G$. We say that a group $G$ has $n~centralizers$
($G\in \mathcal{C}_n$, or $G$ is a $\mathcal{C}_n$-group) if
$|\mathcal{C}(G)| = n$. Also we say that $G$ has a finite number
of centralizers, written $G\in \mathcal{C}$, if $G\in
\mathcal{C}_n$ for some $n\in \mathbb{N}$. Indeed
$\mathcal{C}=\bigcup_{i\geq 1}\mathcal{C}_i$. It is clear that a
group is a $\mathcal{C}_1$-group if and only if it is abelian.
 Belcastro and Sherman in~\cite{Be}, showed that there is no
finite $\mathcal{C}_n$-group for $n\in \{2, 3\}$ (while Ashrafi
in~\cite{Ash1}, showed that, for any positive integer $n\neq 2,
3$, there exists a finite group $G$ such that $|\mathcal{C}(G)| =
n$). Also they characterized all finite $\mathcal{C}_n$-groups for
$n\in \{4, 5\}$. Tota (see Appendix of \cite{tot}) proved that
every arbitrary $\mathcal{C}_4$-group is soluble. The author in
\cite{zar1} showed that the derived length of a soluble
$C_n$-group (not necessarily finite) is $\leq n$. For more details
concerning $\mathcal{C}_n$-groups see \cite{abd,Ash1, Ash2,
Ash3,zar1,zar2}. In this paper, we obtain a solubility criteria
for $C_n$-groups in terms of $|G|$ and $n$.

Our main results are:\\

 \noindent{\bf Theorem A.}
 Let $G$ be a finite $\mathcal{C}_n$-group with $n\leq 21$, then $G$
 is soluble. The alternating group of degree 5 has 22
centralizers.\\

\noindent{\bf Theorem B.}
If $G$ is a finite  $\mathcal{C}_n$-group, then the following hold:\\
 (1)\;  $|G|< 2n$, then $G$ is a non-nilpotent
group.\\
(2)\;  $|G|< \frac{30n+15}{19}$, then $G$ is a non-nilpotent
soluble group.\\

Let $G$ be a finite $\mathcal{C}_{n}$-group.  In \cite{Be},
Belcastro and Sherman raised the question whether or not there
exists a finite $\mathcal{C}_{n}$-group $G$ other than $Q_8$ and
$D_{2p}$ ($p$ is a prime) such that $|G|\leq 2n$. Ashrafi in
\cite{Ash1} showed that there are several counterexamples for this
question and then Ashrafi raised the  following conjecture
(conjecture 2.4): If $|G|\leq 3n/2$, then $G$ is isomorphic to
$S_3, S_3\times S_3$, or a dihedral group of order $10$. Now by
Theorem A, we can obtain that if $|G|\leq 3n/2$, then $G$ is
soluble. Therefore Theorem A give a partial answer to the
conjecture put forward by Ashrafi.

%------------------------------------------------------------------------------------%

\section{\bf {\bf \em{\bf Proofs}}}
\vskip 0.4 true cm

Let $n > 0$ be an integer and $\mathcal{X}$ be a class of groups.
We say that a group $G$ satisfies the condition $(\mathcal{X}, n)$
($G$ is a $(\mathcal{X}, n)$-group) whenever in every subset with
$n+1$ elements of $G$ there exist distinct elements $x, y$ such
that $\langle x, y\rangle$ is in $\mathcal{X}$. Let $\mathcal{N}$
and $\mathcal{A}$ be the classes of nilpotent groups and abelian
groups, respectively.  Indeed, in a group satisfying the condition
$(\mathcal{A}, n)$, the largest set of non-commuting elements (or
the largest set of elements in which no two generate
an abelian subgroup) has size at most $n$.\\
Here we give an interesting relation between groups that have n
centralizers and groups that satisfy the condition $(\mathcal{A},
n-1)$.

\begin{prop}\label{lo}
Let $n$ be a positive integer and $G$ be a $\mathcal{C}_n$-group
{\rm(}not necessarily finite{\rm)}. Then $G$
 satisfies the condition
 $(\mathcal{A},n-1)$.
 \end{prop}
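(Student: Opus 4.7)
The plan is to argue by a pigeonhole-style map from an arbitrary $n$-element subset of $G$ into $\mathcal{C}(G)$ via $x\mapsto C_G(x)$, exploiting the fact that two elements sharing a centralizer must commute. Concretely, fix any subset $S=\{x_1,\dots,x_n\}\subseteq G$ of size $n$; I want to find distinct $i,j$ with $[x_i,x_j]=1$, which is exactly the condition $(\mathcal{A},n-1)$.

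Consider the map $\phi\colon S\to\mathcal{C}(G)$ defined by $\phi(x_i)=C_G(x_i)$. Since $|S|=n=|\mathcal{C}(G)|$, there are two possibilities. If $\phi$ is not injective, then $C_G(x_i)=C_G(x_j)$ for some $i\neq j$; but $x_i\in C_G(x_i)=C_G(x_j)$, so $x_i$ and $x_j$ commute and $\langle x_i,x_j\rangle$ is abelian. If instead $\phi$ is injective, then it is a bijection onto $\mathcal{C}(G)$; in particular the centralizer $G=C_G(1)\in\mathcal{C}(G)$ lies in the image, so some $x_k\in S$ satisfies $C_G(x_k)=G$, i.e.\ $x_k\in Z(G)$. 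Then for any $j\neq k$, $x_k$ commutes with $x_j$ and $\langle x_k,x_j\rangle$ is again abelian.

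The only subtlety — and the one step a reader might miss — is noticing that $G$ itself is always an element of $\mathcal{C}(G)$ (as the centralizer of the identity, or equivalently of any central element), which is what forces the injective case to produce a central member of $S$. Once this observation is in place, the argument reduces to a two-line pigeonhole, so I do not anticipate any genuine obstacle; the write-up can essentially be the two-case dichotomy above, phrased directly or as a clean contrapositive (if no two of $x_1,\dots,x_n$ commute, then the $C_G(x_i)$ are pairwise distinct and all differ from $G$, forcing $|\mathcal{C}(G)|\geq n+1$).
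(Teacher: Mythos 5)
Your proof is correct and is essentially the paper's argument: the paper runs exactly your contrapositive version, noting that a set of $n$ pairwise ``non-abelian-generating'' elements would yield $n$ pairwise distinct centralizers, all different from $C_G(e)=G$, forcing $|\mathcal{C}(G)|\geq n+1$. The key observation you flag --- that $G$ itself always lies in $\mathcal{C}(G)$ --- is the same one the paper uses.
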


%------------------------------------------------------------------------------------%

\begin{proof}
Suppose, for a contradiction,  that $G$
 does not satisfy the condition
 $(\mathcal{A},n-1)$. Therefore there exists a subset $X=\{a_1,a_2,\cdots, a_{n}\}$ of $G$ such that
 $\langle a_i, a_j\rangle$ is not abelian, for every $1\leq i\not =j\leq n$.  This
 follows that $C_G(a_i)\neq C_G(a_j)$ for every $1\leq i\not =j\leq n$. Now since $C_G(e)=G$, where $e$ is the trivial element of $G$,
  we get  $n=|\mathcal{C}(G)|\geq n+1$, which is impossible.
 \end{proof}

 %------------------------------------------------------------------------------------%

  Note that by easy computation we can see that the symmetric group of degree 4,
  $S_4$, satisfies  the condition $(\mathcal{A},10)$, but $S_4$ is
  not a $\mathcal{C}_{11}$-group
  (in fact, $S_4$ is a $\mathcal{C}_{14}$-group). That is, the converse of the above
  Proposition is not true.\\

We can now deduce Theorem A.\\

\noindent{\bf{Proof of Theorem A.}} Clearly every group satisfies
the condition $(\mathcal{A}, n)$ also satisfies the condition
$(\mathcal{N}, n)$. Thus, by Proposition \ref{lo}, $G$ satisfies
the condition $(\mathcal{N}, n)$ for some $n\leq 20$. Now this
statement follows from the main result of \cite{en}. By easy
computation we can obtain that the alternating group of degree 5,
has 22 centralizers.

%------------------------------------------------------------------------------------%

Note that Ashrafi and Taeri in \cite{Ash3}, proved that, if $G$ is
a finite simple group and $|\mathcal{C}(G)| = 22$, then $G\cong
A_5$. Then they, by this result, claimed that, if $G$ is a finite
group and $|\mathcal{C}(G)|\leq 21$, then $G$ is soluble.
Therefore, in view of Theorem A, we gave positive answer to their
claim.

%------------------------------------------------------------------------------------%

Tota in \cite[Theorem 6.2]{tot}) showed that a group $G$ belongs
to $\mathcal{C}$ if and only if it is center-by-finite. Therefore
it is natural problem to obtain bounds for $|G:Z(G)|$ in terms of
$n$.

%------------------------------------------------------------------------------------%

\begin{thm}\label{tb}
There is some constant $c\in\mathbf{R}_{>0}$ such that for any
$\mathcal{C}_{n}$-group $G$
$$n\leq |G:Z(G)|\leq c^{n-1}.$$
\end{thm}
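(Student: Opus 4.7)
\noindent The plan is to handle the two inequalities separately. For the lower bound $n\leq|G:Z(G)|$, I would use an elementary coset argument: for any $z\in Z(G)$ and any $x\in G$ one has $C_G(xz)=C_G(x)$, since $g$ commutes with $xz$ if and only if it commutes with $x$. Hence the assignment $xZ(G)\mapsto C_G(x)$ is a well-defined surjection from $G/Z(G)$ onto $\mathcal{C}(G)$, and this immediately gives $n=|\mathcal{C}(G)|\leq|G:Z(G)|$.

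\medskip

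\noindent For the upper bound $|G:Z(G)|\leq c^{n-1}$, the plan is to combine Proposition~\ref{lo} with an external theorem of Pyber. By Proposition~\ref{lo}, $G$ satisfies $(\mathcal{A},n-1)$, which is precisely the statement that the maximum size of a set of pairwise non-commuting elements in $G$ is at most $n-1$. By Tota's theorem (Theorem~6.2 of \cite{tot}), $|G:Z(G)|$ is finite, placing us in the setting of Pyber's theorem (J.~London Math.~Soc.~35 (1987), 287--295), which states that there is an absolute constant $c>0$ such that any finite group whose largest set of pairwise non-commuting elements has size at most $k$ satisfies $|G:Z(G)|\leq c^{k}$. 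Taking $k=n-1$ closes the finite case; when $G$ is infinite, the result reduces to the finite one by passing to a finite section of $G$ that realises the same index $|G:Z(G)|$ and inherits the bound on its non-commuting clique number.

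\medskip

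\noindent The main obstacle is the upper bound, which depends essentially on Pyber's nontrivial theorem. A self-contained alternative would be to observe that $G$ is covered by its $n-1$ proper centralizers $C_G(x_1),\ldots,C_G(x_{n-1})$, whose intersection equals $Z(G)$ (since every centralizer of $G$ coincides with some $C_G(x_i)$), and then invoke B.~H.~Neumann's covering theorem to bound $|G:Z(G)|$. However, this route produces only a super-exponential (factorial-type) bound, falling short of the stated exponential bound $c^{n-1}$, so Pyber's sharper estimate appears unavoidable for the claimed form of the theorem.
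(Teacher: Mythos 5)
Your proposal is correct and follows essentially the same route as the paper: the lower bound comes from the observation that elements in the same coset of $Z(G)$ share a centralizer, so the cosets surject onto $\mathcal{C}(G)$, and the upper bound comes from combining Proposition~\ref{lo} with Pyber's theorem on pairwise non-commuting elements. Your additional remarks on finiteness (via Tota's theorem) and on the weaker Neumann-covering alternative go beyond what the paper records, but the core argument is the same.
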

\begin{proof}
First, by the main result of \cite{pyb} and Proposition \ref{lo}
we have $|G:Z(G)|\leq c^{n-1},$ for some constant $c$. To remain
the prove we may assume that $Z(G)\neq 1$. Since elements in the
same coset modulo $Z(G)$ have the same centralizer, it follows
that $n\leq |G:Z(G)|$.
\end{proof}

%------------------------------------------------------------------------------------%

For the proof of Theorem B, we need the following lemma.
\begin{lem}\label{li}
 Let $G$ be a finite $\mathcal{C}_{n}$-group. Then
 $$n\leq\frac{|G|+|I(G)|}{2},$$ where $I(G)=\{a\in G\mid a^2=1\}=\{a\in G\mid a=a^{-1}\}$.
\end{lem}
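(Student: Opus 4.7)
The plan is to exploit the single, elementary fact that $C_G(x)=C_G(x^{-1})$ for every $x\in G$: an element commutes with $x$ if and only if it commutes with $x^{-1}$. This means the map $\varphi\colon G\to \mathcal{C}(G)$ sending $x\mapsto C_G(x)$ factors through the equivalence relation on $G$ defined by $x\sim y \iff y\in\{x,x^{-1}\}$. Consequently $n=|\mathcal{C}(G)|$ is bounded above by the number of equivalence classes of $\sim$.

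Next I would count those classes. An element $a$ satisfies $a=a^{-1}$ exactly when $a\in I(G)$, and such elements form singleton classes. Every other element pairs with its distinct inverse to form a 2-element class. Hence the total number of classes is
$$|I(G)| + \frac{|G|-|I(G)|}{2} = \frac{|G|+|I(G)|}{2}.$$
Combining this with the previous step yields $n\leq \tfrac{|G|+|I(G)|}{2}$, which is the desired inequality.

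There is no real obstacle here; the only thing to be careful about is to state up front that the identity is counted in $I(G)$ (so that $C_G(e)=G$ is accounted for as one of the centralizers), and to note that $\varphi$ really does factor through $\sim$, which is immediate from $C_G(x)=C_G(x^{-1})$. The argument is purely combinatorial and does not require finiteness beyond ensuring the counts make sense.
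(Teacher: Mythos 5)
Your proof is correct and follows essentially the same route as the paper: both arguments use $C_G(x)=C_G(x^{-1})$ to bound $n$ by the number of $\{x,x^{-1}\}$-classes, which is $|I(G)|+\frac{|G|-|I(G)|}{2}=\frac{|G|+|I(G)|}{2}$. Your write-up is in fact a bit more explicit than the paper's one-line computation.
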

\begin{proof}
Since $C_G(a)=C_G(a^{-1})$, we can obtain that
$$n\leq |I(G)|+|\frac{G-I(G)}{2}|\leq \frac{|G|+|I(G)|}{2},$$
as wanted.
\end{proof}

%------------------------------------------------------------------------------------%

\begin{cor}
 Let $G$ be a finite simple $\mathcal{C}_{n}$-group. Then $3n/2< |G|.$
\end{cor}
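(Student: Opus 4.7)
The plan is to read the corollary as a direct consequence of Lemma~\ref{li}, which asserts $2n \leq |G| + |I(G)|$. A short rearrangement shows that $3n/2 < |G|$ is equivalent to $|I(G)| < |G|/3$, so the task reduces to bounding the proportion of elements of order at most $2$ in a finite simple group.

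I would first dispose of the abelian case. A finite simple abelian group is cyclic of prime order $p$, so $n = 1$ and the desired inequality reads $3/2 < p = |G|$, which holds for every prime $p \geq 2$ (checked directly for $p=2$, where the route through $|I(G)|<|G|/3$ fails). So I may assume $G$ is non-abelian simple. Here $Z(G) = 1$, so for every involution $t \in G$ the centralizer $C_G(t)$ is a proper subgroup of $G$ (since $t \in Z(C_G(t))$), and we can write
\[
|I(G)| \;=\; 1 + \sum_{t} \frac{|G|}{|C_G(t)|},
\]
the sum running over a transversal of the conjugacy classes of involutions. The task becomes showing this sum stays below $|G|/3$.

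The main obstacle is establishing $|I(G)| < |G|/3$ uniformly for non-abelian finite simple $G$. I expect the cleanest route is to invoke a classical estimate: either a Brauer--Fowler type bound, which ties $|G|$ to the order of an involution centralizer, or a case inspection via the classification of finite simple groups (for each infinite family the involution count admits an explicit formula that is well below $|G|/3$, and the sporadic exceptions can be verified by direct computation). Once this ingredient is in hand, combining with Lemma~\ref{li} gives
\[
2n \;\leq\; |G| + |I(G)| \;<\; |G| + \tfrac{|G|}{3} \;=\; \tfrac{4|G|}{3},
\]
so $n < 2|G|/3$, i.e., $3n/2 < |G|$, as required.
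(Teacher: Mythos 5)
Your proposal follows essentially the same route as the paper: reduce the claim via Lemma~\ref{li} to the bound $|I(G)|<|G|/3$ and invoke that as a known fact about finite simple groups, which is exactly what the paper does. Your separate treatment of the abelian case ($G$ cyclic of prime order, where $|I(G)|<|G|/3$ actually fails for $p=2,3$) is a small but genuine improvement in care over the paper's one-line argument.
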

\begin{proof}
It is well known that for every simple group we have $I(G)<|G|/3$.
Now  the result follows from Lemma \ref{li}.
\end{proof}

%------------------------------------------------------------------------------------%

Here we show that a semi-simple $\mathcal{C}_{n}$-group has order
bounded by a function of $n$. (Recall that a group $G$ is
semi-simple if $G$ has no non-trivial normal abelian subgroups.)

\begin{prop}
Let $G$ be a semi-simple $\mathcal{C}_{n}$-group. Then $G$ is
finite and $|G| \leq (n-1)!$.
\end{prop}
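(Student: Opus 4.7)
The plan is to prove finiteness first, then to realize $G$ as a subgroup of $S_{n-1}$ via the conjugation action of $G$ on its centralizer set. For finiteness, I would note that the centre $Z(G)$, being a normal abelian subgroup, must be trivial by semi-simplicity; Tota's theorem, quoted in the paragraph preceding Theorem~\ref{tb}, asserts that every $\mathcal{C}_n$-group is center-by-finite, so $|G|=|G:Z(G)|$ is finite.

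For the cardinality bound, I would consider the natural action of $G$ on $\mathcal{C}(G)$ by conjugation: since $gC_G(x)g^{-1}=C_G(gxg^{-1})$, the set $\mathcal{C}(G)$ is invariant. Because $Z(G)=1$, the only element whose centralizer equals $G$ is the identity, so $G\in\mathcal{C}(G)$ is the unique improper centralizer and is fixed by conjugation, while the remaining $n-1$ members of $\mathcal{C}(G)$ are proper and are permuted among themselves. This yields a homomorphism $\phi\colon G\to S_{n-1}$, and it suffices to show that $\phi$ is injective.

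To analyse $K=\ker\phi$, I note that $g\in K$ means $C_G(gxg^{-1})=C_G(x)$ for every $x\in G$; since $x\in C_G(x)$, this forces $gxg^{-1}\in C_G(x)$, and so the commutator $[g,x]=(gxg^{-1})x^{-1}$ lies in $C_G(x)$ and in particular commutes with $x$. Thus $[[g,x],x]=1$ for every $x$, so every element of $K$ is a right $2$-Engel element, hence a right Engel element, of $G$. By Baer's theorem, every right Engel element of a finite group lies in the Fitting subgroup $F(G)$; here $F(G)=1$, since any nontrivial nilpotent normal subgroup of $G$ would have a nontrivial centre, producing a nontrivial normal abelian subgroup of $G$. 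Consequently $K=1$, $\phi$ is injective, and $|G|\le(n-1)!$.

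The single delicate step is the triviality of $K$. One might hope to conclude directly that $K\le Z(G)$, but the identity $C_G(gxg^{-1})=C_G(x)$ only gives that $x$ and $gxg^{-1}$ commute, not that $g$ and $x$ do (the dihedral group $D_8$ already illustrates that $K$ can be strictly larger than $Z(G)$ in general). The detour through $2$-Engel elements together with the Baer--Fitting correspondence is what lets the semi-simplicity hypothesis do its work; without it one would be stuck with a merely 2-Engel-type condition on $K$, which is not by itself incompatible with $K\ne 1$.
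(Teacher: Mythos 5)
Your proof is correct and follows the same skeleton as the paper's: the conjugation action of $G$ on the $n-1$ proper centralizers, an embedding of $G$ modulo the kernel into $S_{n-1}$, and an Engel-type argument showing the kernel is a nilpotent normal subgroup, hence trivial by semi-simplicity. The two differences are in the supporting lemmas. For the kernel, the paper observes that $[a,b]\in C_G(a)$ for all $a,b$ in the kernel $B$, so $B$ is itself a $2$-Engel \emph{group} and hence nilpotent of class at most $3$ by Levi--Kappe, an argument valid for arbitrary groups; you instead show each $g\in K$ is a right $2$-Engel element of all of $G$ and invoke Baer's theorem to place $K$ inside $F(G)=1$, which requires $G$ to be finite already. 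That is why you must establish finiteness first (via $Z(G)=1$ and Tota's center-by-finite theorem), whereas the paper gets finiteness for free from the embedding $G\hookrightarrow S_{n-1}$ once $B=1$ is known. Both routes are valid; the paper's is marginally more self-contained (no dependence on Tota or on Baer's finiteness hypothesis), while yours cleanly separates the two conclusions of the proposition. Your closing remark that one cannot conclude $K\le Z(G)$ directly is a fair caution, and the detour through Engel conditions is indeed the point of the argument in both versions.
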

 \begin{proof}
 The group $G$ acts on the set $A:=\{C_G(x) ~|~ a \in G \setminus Z(G)\}$ by
conjugation. By assumption $|A|=n-1$. Put $B=\bigcap_{x\in
G}N_G(C_G(x))$. The subgroup $B$ is the kernel of this action and
so $$G/B\hookrightarrow S_{n-1}. \eqno{(*)}$$ By definition of
$B$, the centralizer $C_G(a)$ is normal in $B$ for any element
$a\in G$. Therefore $a^{-1}a^b \in C_G(a)$ for any two elements
$a, b\in  B$. So $B$ is a 2-Engel group (see \cite{Kap}). Now it
is well known that $B$ is a nilpotent group of class at most 3.
Now as $G$ is a semi-simple group, we can obtain that $B=1$. It
follows from (*) that $G$ is a finite group and $|G|\leq (n-1)!$,
as wanted.
\end{proof}

%------------------------------------------------------------------------------------%

 We need the following result, for the proof of Theorem B.
\begin{thm}{\rm(Potter, 1988)}\label{tp}
Suppose $G$ admits an automorphism which inverts more than
$4|G|/15$ elements. Then $G$ is soluble.
\end{thm}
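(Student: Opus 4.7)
The plan is to reduce to non-abelian finite simple groups and then verify the $4/15$ bound there, with $A_5$ as the extremal example: its $15$ involutions together with $1$ give $16/60 = 4/15$ of the group, inverted by the identity automorphism. As a warm-up, I would record the elementary structure of the inversion set $I_\alpha(G) := \{g \in G : \alpha(g) = g^{-1}\}$: it contains $1$, is closed under inversion and under $\alpha$, and whenever $x, y \in I_\alpha(G)$ commute one has $\alpha(xy) = x^{-1}y^{-1} = (xy)^{-1}$, so $xy \in I_\alpha(G)$. Consequently $I_\alpha(G)$ meets every abelian $\alpha$-invariant subgroup in an actual subgroup and is a union of $\alpha$-twisted conjugacy classes.

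Next, I would argue by minimal counterexample. Let $(G,\alpha)$ be insoluble of smallest order with $|I_\alpha(G)|>4|G|/15$, and let $N$ be a minimal $\alpha$-invariant normal subgroup. If $\alpha(g)=g^{-1}$ then $\bar\alpha(gN)=(gN)^{-1}$, so each coset of $N$ meeting $I_\alpha(G)$ lies in $I_{\bar\alpha}(G/N)$; hence
$$
|I_{\bar\alpha}(G/N)| \;\geq\; \frac{|I_\alpha(G)|}{|N|} \;>\; \frac{4\,|G/N|}{15}.
$$
If $N$ is soluble, minimality forces $G/N$, and therefore $G$, to be soluble --- a contradiction. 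Hence the soluble radical of $G$ is trivial and $N=\mathrm{Soc}(G)$ is a direct product of isomorphic non-abelian simple groups permuted by $\alpha$. A further reduction (passing to a single $\alpha$-orbit of simple factors and an appropriate power of $\alpha$) shifts the task to the almost simple case: show $|I_\sigma(S)|\leq 4|S|/15$ for every non-abelian finite simple $S$ and every $\sigma\in\mathrm{Aut}(S)$.

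The main obstacle is precisely this simple-group bound. For $\sigma=\mathrm{id}$ it reduces to $(1+i_2(S))/|S|\leq 4/15$, where $i_2(S)$ is the number of involutions, with equality only at $A_5$; for general $\sigma$ one can bound $|I_\sigma(S)|$ in terms of the order of a centralizer $C_{S\rtimes\langle\sigma\rangle}(\sigma\tau)$ for a suitable $\tau$, so the problem amounts to bounding centralizers of (twisted) involutions in almost simple groups. I would carry this out via the classification of finite simple groups, checking the alternating, classical, exceptional and sporadic families separately; a classification-free treatment in the spirit of Brauer--Fowler together with Wall's $3/4$-theorem is conceivable but substantially more delicate, and is the step where the specific constant $4/15$ (rather than something larger) really has to be earned.
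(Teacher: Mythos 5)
This statement is Potter's theorem, which the paper does not prove at all: it is imported verbatim from \cite{pot} and used as a black box in the proof of Theorem B, and in fact only in the special case $\alpha=\mathrm{id}$, where the inverted elements are exactly the elements of order at most $2$. So the real comparison is between your sketch and Potter's published argument. Your outline does reflect the standard architecture of such results (minimal counterexample, quotient by a soluble $\alpha$-invariant normal subgroup, reduction to a socle that is a product of non-abelian simple groups, then a bound for simple groups via the classification), and your coset inequality $|I_{\bar\alpha}(G/N)|\geq |I_\alpha(G)|/|N|$ is correct. But what you have written is a plan, not a proof: the decisive step --- establishing $|I_\sigma(S)|\leq 4|S|/15$ for every non-abelian finite simple group $S$ and every $\sigma\in\mathrm{Aut}(S)$ --- is exactly the content of the theorem, and you explicitly defer it. Until that case analysis (alternating, classical, exceptional, sporadic, including outer and field automorphisms) is actually carried out, nothing has been proved; this is not a routine verification, since for $\sigma=\mathrm{id}$ it already requires sharp involution counts with $A_5$ attaining equality.

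Two further points in the reduction need repair. First, your claim that $I_\alpha(G)$ is a union of $\alpha$-twisted conjugacy classes is only correct when $\alpha^2=1$ (one computes $\alpha(h^{-1}g\,\alpha(h))=(h^{-1}g\,\alpha(h))^{-1}$ only if $\alpha^2(h)=h$); for general $\alpha$ the bookkeeping that identifies $I_\alpha(G)$ with a set of elements in the coset $G\alpha$ of $G\rtimes\langle\alpha\rangle$ must be set up more carefully, and this matters precisely in the twisted simple-group cases you need. Second, the passage from ``trivial soluble radical'' to ``almost simple'' is not automatic: a minimal $\alpha$-invariant normal subgroup may be a product of several simple factors permuted nontrivially by $\alpha$ and by $G$, and one must bound the number of inverted elements in each coset of $N$, where the relevant subsets are twisted diagonal subgroups; the inequality you would need there is not the naive one and has to be proved. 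If your intention is only to justify the paper's use of the theorem, note that the paper needs just the $\alpha=\mathrm{id}$ case, i.e.\ that $1+i_2(G)>4|G|/15$ forces solubility, which still requires the full simple-group analysis (or a citation to \cite{pot}).
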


%------------------------------------------------------------------------------------%

\noindent{\bf{Proof of Theorem B.}} (1).\; Suppose, for a
contradiction, that $G$ is nilpotent group, so in particular
$Z(G)\neq 1$. Now it follows from Theorem \ref{tb} that $2n\leq
|G|$, which is contradiction.\\
(2).\; From part (1) we obtain that $G$ is not nilpotent.
 Since $|G|< \frac{30n+15}{19}$ and so $2n> \frac{19|G|-15}{15}$,
   Lemma \ref{li} implies that
$$|I(G)|\geq 2n-|G|> \frac{4|G|}{15}-1.$$ On the other hand, since
 $I(G)$ is the set of all elements of $G$ that inverted by
the identity automorphism, Theorem \ref{tp} completes the proof.

%------------------------------------------------------------------------------------%

\vskip 0.4 true cm

\begin{center}{\textbf{Acknowledgments}}
\end{center}
 This
research was supported by
University of Kurdistan. \\ \\
\vskip 0.4 true cm

%------------------------------------------------------------------------------------%

%-----------------------------------------------------------------------------
%-----------------------------------------------------------------------------

\bigskip
\bigskip

%{\bf Received: Month xx, 200x}

{\footnotesize \pn{\bf}\; \\ {Department of Mathematics},
{University
of Kurdistan, P.O.Box 416,} {Sanandaj, Iran}\\
{\tt Email: m.zarrin@uok.ac.ir}\\
\end{document}